\theoremstyle{plain}
\newtheorem{thm}{\protect\theoremname}
  \theoremstyle{plain}
  \newtheorem{lem}[thm]{\protect\lemmaname}
 \theoremstyle{definition}
 \newtheorem*{defn*}{\protect\definitionname}
  \theoremstyle{plain}
  \newtheorem{cor}[thm]{\protect\corollaryname}
  \providecommand{\corollaryname}{Corollary}
  \providecommand{\definitionname}{Definition} 
  \providecommand{\lemmaname}{Lemma}
\providecommand{\theoremname}{Theorem}
  \providecommand{\definitionname}{Definition}
  \providecommand{\lemmaname}{Lemma}
  \providecommand{\theoremname}{Theorem}
\begin{document}
\bibliographystyle{plainnat}
\title{Analytical Solution of the Forward Displacement Problem for Spherical
Parallel Manipulators}

\author{Jose Rodriguez\\
Department of Mathematics, University of California at Berkeley\\
970 Evans Hall, Berkeley, California, 94720\\
jo.ro@berkeley.edu\\
 \\
 Maurizio Ruggiu \\
 Department of Mechanical Engineering, University of Cagliari \\
 Piazza d'Armi - 09123 Cagliari, Italy \\
 e-mail: ruggiu@dimeca.unica.it}
\maketitle

\begin{abstract}
In this paper, an analytical method that solves the \textit{Forward displacement problem (FDP)} of several common spherical parallel manipulators (SPMs) is presented. 
The method uses the quaternion algebra to express the problem as a system of equations and uses the Dixon determinant procedure to solve it. 
In addition, a case study is proposed for a specific SPM, which satisfies certain geometric conditions. Namely,  the SPM having  $3-\hat{\underline{R}}(RRR)_E\hat{R}$ architecture, with  $R$ denoting a revolute joint, $\hat{R}$'s denoting intersecting joints, $P$  denoting a prismatic joint, underlines indicating  the actuated joint,  and $(~)_E$ indicating a plane chain.

\end{abstract}
\section*{Keywords}  
Quaternions, Dixon determinant, forward displacement problem, spherical parallel manipulators.
\section{Introduction}
A \textit{spherical parallel mechanism (SPM)} refers  to a moving platform 
rotated about a fixed point by manipulators with three degrees of freedom.    
This fixed point is  designated as the center of spherical motion. 
This type of mechanism may find numerous applications in orientating devices and modelings wrists by exploiting some advantages in parallel architecture. 
Such advantages include  accuracy, repeatability in positioning, and excellent dynamic properties. 
In the literature, numerous SPM architectures have been proposed~\cite{Birglen2002,DiGregorio2004,DiGregorio2001,Gosselin1994,Li2002}. In addition,  studies covering a variety of relevant problems (workspace modeling, dexterity evaluation, design and optimization, singularity analysis and type synthesis) have been reported~\cite{Deidda2010,Gosselin1993,DiGregorio2001b,Gosselin1989,Liu2000,Leguay1997,Asada1985,Karouia2000,Kong2004}. 
The \textit{forward displacement problem (FDP)} of SPMs is concerned with finding the orientation of the moving platform for a given set of actuated-joint-variable values. 
However, due to their multi-loop architecture, there does not exist a closed form solution to the general problem. 
Moreover, the presence of transcendental equations suggests a high computational complexity. 
As noted in~\cite{Innocenti1993,Gosselin1994},  the problem admits at most eight solutions, from the roots of an eighth-order polynomial equation, known as the \textit{robot characteristic equation}.
Gosselin \textit{et al.}~\cite{Gosselin1994} proposed a method solving the FDP for SPMs with only revolute joints. 
In this method, the orientation of the end-effector is described by  Euler angles. 
Solutions in the form of an eighth-order polynomial equation were found. A similar solution was reported by Huang and Yao~\cite{Huang1999}, who regarded the direction cosines of each joint axis as functions of the actuated-joint variables. 
Innocenti and Parenti-Castelli~\cite{Innocenti1993} solved the FDP for SPMs with prismatic joints deriving a system of two equations, one eighth-order polynomial and one linear equation. 
Recently, Bai \textit{et al.}~\cite{Bai2009} revisited the problem, proposing a method based on the input-output equation of spherical four-bar linkages. 
\\
In this paper,  the closed-loop kinematic chain of a SPM is partitioned into two 
four-bar spherical chains leading to a trigonometric equation in the joint angles, which is solved semi-graphically  to obtain the joint variables for the determination of the moving platform orientation. 
\\
The idea behind this paper is to find an analytical method  to solve the FDP of SPMs  in radicals that obey a certain geometric condition.
The method proposed,  exploits the quaternion algebra and the Dixon determinant procedure. 
This approach benefits the FDP in the following respects:  
\\
\emph{(i)} The quaternion algebra is well suited for the spherical kinematics because it does not have the singularities or transcendental equations any  three parameter representation gives.
\\\emph{(ii)} The Dixon method is computationally inexpensive for a fixed set of parameters so a given system may have its solutions calculated efficiently.
\\\emph{(iii)} In the case of symbolic parameters, the Dixon method outputs a twenty by twenty matrices whose determinant are the coordinates of the solutions. 
\\\emph{(iv)} The FDP  of some SPM's may be solved in terms of radicals using these methods. 

\section{Problem Formulation}
The procedure proposed in the paper can be applied to SPMs subjected to the following geometric condition for each leg:
\begin{equation}
\mathbf{w}_i^T\mathbf{v}_i=\mathcal{C}_i~~~(i=1,2,3) \label{EQ_geometrical_condition}
\end{equation}
where $\mathbf{w}_i$ are vectors in $\mathbb{R}^3$  denoting the orientation of the axes of the motor joints. See Figure 1 for an example.
The 
 $\mathbf{v}_i$ are vectors in  $\mathbb{R}^3$  denoting  the orientation of the axes of the moving platform, and the  $\mathcal{C}_i$'s are constants. 
Equation (~\ref{EQ_geometrical_condition}) and others  are satisfied in the $3-\underline{R}RR$~\cite{Gosselin1994} and in the $3-\hat{R}(R\underline{P}R)\hat{R}$~\cite{Innocenti1993}.
In equation (~\ref{EQ_geometrical_condition}), the vectors are both expressed in a fixed orthonormal reference system with origin coinciding with the center of the spherical rotation. 
However, we define a reference system $\left\lbrace m\right\rbrace$ on top of the moving platform such that $\mathbf{v}_i=\mathbf{Q}\mathbf{\nu}_i$ ($\mathbf{\nu}_i$ is the vector $\mathbf{v}_i$ expressed in $\left\lbrace m\right\rbrace$). 
The matrix $\mathbf{Q}$ is  an orientation matrix with entries determined by the unit quaternions $\mathcal{Q}$~\cite{BottemaBook}
\begin{equation}
\mathcal{Q}=q_0+q_1\mathbf{e}_1+q_2\mathbf{e}_2+q_3\mathbf{e}_3, \nonumber
\end{equation}
with  $q_j$ $(j=0,...,3)\in\mathbb{R}$ and $1,\mathbf{e}_1,\mathbf{e}_2,\mathbf{e}_3$  a basis for the quaternion algebra.
It can be shown $\mathcal{Q}$  is expressed with entries in the real numbers $q_1,q_2,q_3,q_4$ as follows:
\begin{equation}
\mathbf{Q}=\left[\begin{array}{ccc} {(q_0^2+q_1^2-q_2^2-q_3^2)} & {2(q_1q_2-q_0q_3)} & {2(q_0q_2+q_1q_3)} \\ {2(q_1q_2+q_0q_3)} & {(q_0^2-q_1^2+q_2^2-q_3^2)} & {2(q_2q_3-q_0q_1)} \\ {2(q_1q_3-q_0q_2)}  & {2(q_0q_1+q_2q_3)} &  {(q_0^2-q_1^2-q_2^2+q_3^2)}  \end{array}\right] .\nonumber 
\end{equation}
In addition to the three geometric conditions, the use of unit quaternions gives a fourth normalizing condition. 
Indeed, every quaternion $\mathcal{Q}$ has a conjugate  $\mathcal{\tilde{Q}}=\left[\begin{array}{c c c c} {q_0} & {-q_1} & {-q_2} & {-q_3} \end{array}\right]$, which satisfies
\begin{equation}
\mathcal{Q}\mathcal{\tilde{Q}}=q_0^2+q_1^2+q_2^2+q_3^2=1 \label{EQ_unit_quaternion_definition}.
\end{equation}
While there are several known  representations  of $\mathbf{Q}$, this one using four parameters,  $q_0$, $q_1$, $q_2$, $q_3$,  offers robustness against the singularities  arising in any three parameter representation.
Equations (~\ref{EQ_geometrical_condition}), (~\ref{EQ_unit_quaternion_definition}) define a system of four quadric polynomial equations $f_i=0$,~$(i=1,...,4)$ with 4 unknowns: $q_j$,~$(j=0,...,3)$ providing the orientation of the moving platform when the motor angles are given (\emph{i.e.}, solution of the FD problem). 

We have rephrased our kinematics problem into the problem of solving a system of four quadrics in four unknowns $q_{0},q_{1},q_{2},q_{3}$. In this paper the \textit{Dixon determinant} method is used. 
We now show how to derive a \emph{Dixon matrix}, which vanishes at  coordinates of the solutions. \\

\section{Solution of the System of Polynomial Equations}


\subsection*{Notation}

In 1908, Dixon developed a criterion~\cite{Dixon1908} to determine when four
quadrics in three unknowns vanish at a common point.
 This criterion
claims the four quadrics vanish at a common point, if  a certain $20\times20$ matrix is singular. 
We call this matrix a Dixon matrix.  Its entries  will be determined by the coefficients of twenty specially chosen cubics in three unknowns. 
Given a  system of four quadrics in four unknowns, the same criterion can be used to determine the coordinates 
of solutions. By considering different combinations of the coordinates, we have a finite list which contains our solution set.  
Using this technique, we illustrate  the $3-\hat{\underline{R}}(RRR)_E\hat{R}$ 
case followed by an explicit numerical example.\\
\\
We set notation by recalling  algebraic geometry facts from~\cite{CoxBook}
to derive a Dixon matrix.
Let $f_{1},\dots,f_{s}$ be in the polynomial ring $\mathbb{C}\left[x_{1},\dots,x_{n}\right]$.
We set 
\[
{\bf V}\left(f_{1},\dots,f_{s}\right)=\left\{ \left(a_{1},\dots,a_{n}\right)\in\mathbb{C}^{n}\mid f_{i}\left(a_{1},\dots,a_{n}\right)=0\,\, for\,i=1,2,\dots ,m\right\}.
\]
and say  ${\bf V}\left(f_{1},\dots,f_{s}\right)$ is the affine variety
defined by $f_{1},\dots,f_{s}$ over $\mathbb{C}$. 
The affine variety ${\bf V}\left(f_{1},\dots,f_{s}\right)$ is the set of points in $\mathbb{C}^n$ which are solutions to the system   defined by $f_{1},\dots,f_{s}$. 
We set
$\langle f_{1},\dots,f_{s}\rangle=\left\{ \sum_{i=1}^{s}h_{i}f_{i}\mid h_{i}\in\mathbb{C}\left[x_{1},\dots,x_{n}\right]\right\} $
and call $\langle f_{1},\dots,f_{s}\rangle$ the ideal generated by
$f_{1},\dots,f_{s}$. 
Let $V$ (usually a variety of an ideal) be
a subset of $\mathbb{C}^{n}$. 
Then we set 
\[
{\bf I}\left(V\right)=\left\{ f\in\mathbb{C}\left[x_{1},\dots,x_{n}\right]\mid f\left(a_{1},\dots,a_{n}\right)=0\,\text{for all}\,\left(a_{1},\dots,a_{n}\right)\in V\right\} .
\]
We call ${\bf I}\left(V\right)$ the ideal of $V$. 
Taking the ideal
of two  affine varieties reverses containment.
So 
given two affine varieties $V,W\subset\mathbb{C}^{n}$, 
we have $V\subset W$ if and only if ${\bf I}\left(V\right)\supset{\bf I}\left(W\right)$. 

Generally,  we  include parameters 
$A_{1},\dots,A_{m}$ in the coefficients of 
 $f_{1},\dots,f_{s}$.
In this case, we say $f_{1},\dots,f_{s}\in\mathbb{C}\left(A_{1},\dots,A_{m}\right)\left[x_{1},\dots,x_{n}\right]$
are polynomials in the unknowns $x_{1},\dots,x_{n}$ with coefficients
in $\mathbb{C}\left(A_{1},\dots,A_{m}\right)$. 
By replacing
$\mathbb{C}$ with $\mathbb{C}\left(A_{1},\dots,A_{m}\right)$ we
have analogous definitions of variety and ideal over an algebraic
closure of $\mathbb{C}\left(A_{1},\dots,A_{m}\right)$.
\\

\subsection*{Dixon Matrix}
In this paper we will consider  the polynomial ring $\mathbb{C}(A_1,\dots,A_m)[x_1,x_2,x_3]$. 
In this ring, we say a polynomial $g$ is a cubic in $x_1,x_2,x_3$   if we can write 
$$g=[C]\cdot [L]$$
with $\left[L_{\,}\right]$ equal to
\[
\left[x_{1}^{3},x_{1}^{2}x_{2},x_{1}^{2}x_{3},x_{1}x_{2}^{2},x_{1}x_{2}x_{3},x_{1}x_{3}^{2},x_{2}^{3},x_{2}^{2}x_{3},x_{2}x_{3}^{2},x_{3}^{3},x_{1}^{2},x_{1}x_{2},x_{1}x_{3},x_{2}^{2},x_{2}x_{3},x_{3},x_{1},x_{2},x_{3},1\right]^{T},
\] and the entries of $[C]$  in $\mathbb{C}(A_1,\dots,A_m)$. 
The $\cdot$ is the usual dot product.  
We  say the $i$-th entry of $[C]$  is the coefficient of the $i$-th entry of  $[L]$, 
and say $[C]$  is the coefficient row vector of the cubic $g$. 

Given four quadrics
$f_1,f_2,f_3,f_4\in\mathbb{C}(A_1,\dots,A_m)[x_1,x_2,x_3]$, we show how to derive a certain list of twenty cubics, which vanish on   ${\bf V}\left(f_{1},f_{2},f_{3},f_{4}\right)$. 
From that list of twenty cubics, we construct a Dixon matrix by taking
its $i$-th row to be the coefficient row vector of the $i$-th cubic
in the list. 
Since there are twenty  monomials of degree three or less in three unknowns, the matrix we construct will be square and have a  determinant. 
The twenty cubics are chosen so that  the vanishing of this determinant  provides information about the system of four quadrics.

Specifically, the first
four cubics in our list are chosen to be $f_{1},f_{2},f_{3},f_{4}$, regarded as cubics whose first ten coefficients  are zero.
The next twelve
cubics are chosen as $x_{j}f_{i}$ where $1\leq j\leq3$, $1\leq i\leq4$.
The final four cubics in our list of twenty will be constructed below.
After defining these, we  give the definition of a $Dixon$ $matrix$ and state how it is used in the criterion.
 
To construct   the final four cubics, we first construct a matrix $U$ determined by the $f_i\in\mathbb{C}(A_1,\dots,A_m)[x_1,x_2,x_3]$. Let 
\[
U:=\left[\begin{array}{cccc}
\frac{f_{1}\left(x_{1},x_{2},x_{3}\right)-f_{1}\left(y_{1},x_{2},x_{3}\right)}{\left(x_{1}-y_{1}\right)}, & \frac{f_{1}\left(y_{1},x_{2},x_{3}\right)-f_{1}\left(y_{1},y_{2},x_{3}\right)}{\left(x_{2}-y_{2}\right)}, & \frac{f_{1}\left(y_{1},y_{2},x_{3}\right)-f_{1}\left(y_{1},y_{2},y_{3}\right)}{\left(x_{3}-y_{3}\right)}, & f_{1}\left(y_{1},y_{2},y_{3}\right)\\
\frac{f_{2}\left(x_{1},x_{2},x_{3}\right)-f_{2}\left(y_{1},x_{2},x_{3}\right)}{\left(x_{1}-y_{1}\right)}, & \frac{f_{2}\left(y_{1},x_{2},x_{3}\right)-f_{2}\left(y_{1},y_{2},x_{3}\right)}{\left(x_{2}-y_{2}\right)}, & \frac{f_{2}\left(y_{1},y_{2},x_{3}\right)-f_{2}\left(y_{1},y_{2},y_{3}\right)}{\left(x_{3}-y_{3}\right)}, & f_{2}\left(y_{1},y_{2},y_{3}\right)\\
\frac{f_{3}\left(x_{1},x_{2},x_{3}\right)-f_{3}\left(y_{1},x_{2},x_{3}\right)}{\left(x_{1}-y_{1}\right)}, & \frac{f_{3}\left(y_{1},x_{2},x_{3}\right)-f_{3}\left(y_{1},y_{2},x_{3}\right)}{\left(x_{2}-y_{2}\right)}, & \frac{f_{3}\left(y_{1},y_{2},x_{3}\right)-f_{3}\left(y_{1},y_{2},y_{3}\right)}{\left(x_{3}-y_{3}\right)}, & f_{3}\left(y_{1},y_{2},y_{3}\right)\\
\frac{f_{4}\left(x_{1},x_{2},x_{3}\right)-f_{4}\left(y_{1},x_{2},x_{3}\right)}{\left(x_{1}-y_{1}\right)}, & \frac{f_{4}\left(y_{1},x_{2},x_{3}\right)-f_{4}\left(y_{1},y_{2},x_{3}\right)}{\left(x_{2}-y_{2}\right)}, & \frac{f_{4}\left(y_{1},y_{2},x_{3}\right)-f_{4}\left(y_{1},y_{2},y_{3}\right)}{\left(x_{3}-y_{3}\right)}, & f_{4}\left(y_{1},y_{2},y_{3}\right)
\end{array}\right].
\]
Since $f_{i}\left(x_{1},x_{2},x_{3}\right)-f_{i}\left(y_{1},x_{2},x_{3}\right)$,
$f_{i}\left(y_{1},x_{2},x_{3}\right)-f_{i}\left(y_{1},y_{2},x_{3}\right)$,
and $f_{i}\left(y_{1},y_{2},x_{3}\right)-f_{i}\left(y_{1},y_{2},y_{3}\right)$ 
respectively identically vanish  when $x_{1}-y_{1}=0$, $x_{2}-y_{2}=0$, $x_{3}-y_{3}=0$,
 we can write 
\[
\begin{array}{c}
\left[\begin{array}{ccc}
f_{i}\left(x_{1},x_{2},x_{3}\right)-f_{i}\left(y_{1},x_{2},x_{3}\right), & f_{i}\left(y_{1},x_{2},x_{3}\right)-f_{i}\left(y_{1},y_{2},x_{3}\right), & f_{i}\left(y_{1},y_{2},x_{3}\right)-f_{i}\left(y_{1},y_{2},y_{3}\right)\end{array}\right]\\
=\left[\begin{array}{ccc}
\left(x_{1}-y_{1}\right) h_{i1}(x_{1},x_{2},x_{3},y_{1}), & \left(x_{2}-y_{2}\right) h_{i2}(x_{2},x_{3},y_{1},y_{2}), & \left(x_{3}-y_{3}\right) h_{i3}(x_{3},y_{1},y_{2},y_{3})\end{array}\right]
\end{array}\]
with linear forms $h_{ij}\in\mathbb{C}(A_1,\dots,A_m)[x_1,x_2,x_3,y_1,y_2,y_3]$. 
So 
\[
U=\left[\begin{array}{cccc}
h_{11}(x_{1},x_{2},x_{3},y_{1}), & h_{12}(x_{2},x_{3},y_{1},y_{2}), & h_{13}(x_{3},y_{1},y_{2},y_{3}), & f_{1}\left(y_{1},y_{2},y_{3}\right)\\
h_{21}(x_{1},x_{2},x_{3},y_{1}), & h_{22}(x_{2},x_{3},y_{1},y_{2}), & h_{23}(x_{3},y_{1},y_{2},y_{3}), & f_{2}\left(y_{1},y_{2},y_{3}\right)\\
h_{31}(x_{1},x_{2},x_{3},y_{1}), & h_{32}(x_{2},x_{3},y_{1},y_{2}), & h_{33}(x_{3},y_{1},y_{2},y_{3}), & f_{3}\left(y_{1},y_{2},y_{3}\right)\\
h_{41}(x_{1},x_{2},x_{3},y_{1}), & h_{42}(x_{2},x_{3},y_{1},y_{2}), & h_{43}(x_{3},y_{1},y_{2},y_{3}), & f_{4}\left(y_{1},y_{2},y_{3}\right)
\end{array}\right].
\]
With these two formulations of $U$ we  prove the following lemma. 
\begin{lem}
Let $f_{1},f_{2},f_{3},f_{4}\in\mathbb{C}(A_1,\dots,A_m)\left[x_{1},x_{2},x_{3}\right]$, $h_{ij}\in\mathbb{C}(A_1,\dots,A_m)\left[x_{1},x_{2},x_{3},y_1,y_2,y_3\right]$,
and $U$ be as above. 
Let $F\left(x_{1},x_{2},x_{3},y_{1},y_{2},y_{3}\right):=\det U = \sum_{i,j,k}\psi_{ijk}\cdot y_{1}^{i}y_{2}^{j}y_{3}^{k}$, with $\psi_{ijk}\in\mathbb{C}(A_1,\dots,A_m)[x_1,x_2,x_3]$.
If $f_1,f_2,f_3,f_4$  are of degree $2$ or less,
then $\psi_{ijk}$  vanish on ${\bf V}\left(f_{1},f_{2},f_{3},f_{4}\right)$ and are cubics
in the unknowns $x_{1},x_{2},x_{3}$ for $i+j+k\leq1$.
\end{lem}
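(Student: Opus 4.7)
The plan is to prove the two claims separately, exploiting the two formulations of $U$ given just before the lemma. The first (difference-quotient) form will deliver the vanishing on $\mathbf{V}(f_1,f_2,f_3,f_4)$, while the second (factored) form will control the $x$-degree.

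For the vanishing, I would fix $(a_1,a_2,a_3)\in\mathbf{V}(f_1,f_2,f_3,f_4)$ and evaluate $U$ at $x_k=a_k$. Multiplying the first three columns by $(a_1-y_1)$, $(a_2-y_2)$, $(a_3-y_3)$ respectively and adding all four columns, I observe that in each row the four pieces telescope:
\[
\bigl[f_i(a)-f_i(y_1,a_2,a_3)\bigr]+\bigl[f_i(y_1,a_2,a_3)-f_i(y_1,y_2,a_3)\bigr]+\bigl[f_i(y_1,y_2,a_3)-f_i(y_1,y_2,y_3)\bigr]+f_i(y_1,y_2,y_3)=f_i(a)=0.
\]
Because the coefficient of the fourth column in this combination is the constant $1$, the columns of $U$ evaluated at $x=a$ are linearly dependent over $\mathbb{C}(A_1,\dots,A_m)(y_1,y_2,y_3)$, so $F(a_1,a_2,a_3,y_1,y_2,y_3)=0$ identically in $y_1,y_2,y_3$. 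Reading off the coefficient of $y_1^iy_2^jy_3^k$ gives $\psi_{ijk}(a_1,a_2,a_3)=0$ for every $(i,j,k)$, and in particular for $i+j+k\leq 1$.

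For the degree bound I would use the second formulation. Since each $f_i$ has total degree at most $2$, writing out each numerator and extracting the stated factor shows that $h_{i1}$ is linear in $(x_1,x_2,x_3)$, that $h_{i2}$ is linear in $(x_2,x_3)$ and free of $x_1$, that $h_{i3}$ is linear in $x_3$ and free of $x_1,x_2$, and that the last column entry $f_i(y_1,y_2,y_3)$ has $x$-degree $0$. By the multilinearity of the determinant in its columns, the total degree of $\det U=F$ in $(x_1,x_2,x_3)$ is at most $1+1+1+0=3$, and this bound is inherited by every coefficient $\psi_{ijk}$, which is therefore a cubic in $x_1,x_2,x_3$.

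The substantive step is spotting the telescoping column combination; once it is in hand the rest is bookkeeping. A minor technical point worth verifying is that the $h_{ij}$ really are polynomials, not merely rational functions, in the stated variables, but this is exactly the observation recorded just above the lemma, where each numerator is noted to vanish identically when $x_\ell=y_\ell$.
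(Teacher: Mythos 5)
Your proof is correct and follows essentially the same route as the paper: the telescoping linear combination of columns (with coefficient $1$ on the last column) to get vanishing of $\det U$ at solution points independently of the $y$'s, and the column-degree count ($1+1+1+0$) to bound the $x$-degree of each $\psi_{ijk}$ by three. No gaps to report.
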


\begin{proof}
Let $U_{i}$ denote the $i$-th column of $U$. 
Then, for any point $\left(s_{1},s_{2},s_{3}\right)\in{\bf V}\left(\langle f_{1},f_{2},f_{3},f_{4}\rangle\right)$,
\[
\left(s_{1}-y_{1}\right)U_{1}+\left(s_{2}-y_{2}\right)U_{2}+\left(s_{3}-y_{3}\right)U_{3}+U_{4}
\]
is identically the zero vector when $(x_1,x_2,x_3)=(s_1,s_2,s_3)$.  
To see this, we note the $i$-th entry of this vector is  
\[
\left(s_{1}-y_{1}\right) h_{i1}(s_{1},s_{2},s_{3},y_{1})
+\left(s_{2}-y_{2}\right) h_{i2}(s_{2},s_{3},y_{1},y_{2})
+\left(s_{3}-y_{3}\right) h_{i3}(s_{3},y_{1},y_{2},y_{3}) 
+ f_{i}\left(y_{1},y_{2},y_{3}\right),
\]
which equals 
\[
f_{i}\left(s_{1},s_{2},s_{3}\right)-f_{i}\left(y_{1},s_{2},s_{3}\right) + f_{i}\left(y_{1},s_{2},s_{3}\right)-f_{i}\left(y_{1},y_{2},s_{3}\right)
+f_{i}\left(y_{1},y_{2},s_{3}\right)-f_{i}\left(y_{1},y_{2},y_{3}\right)+ f_{i}\left(y_{1},y_{2},y_{3}\right) 
\]
simplifying to $f_{i}\left(s_{1},s_{2},s_{3}\right)$.  
Since $(s_1,s_2,s_3)\in {\bf V}\left(\langle f_{1},f_{2},f_{3},f_{4}\rangle\right)$ we have $f_i(s_1,s_2,s_3)=0$.
So the columns of $U$  are linearly dependent and the determinant of $U$ vanishes. 
This means that $F=\det U$ vanishes at $\left(s_{1},s_{2},s_{3}\right)$ independently
of the $y's$
and  each $\psi_{ijk}\left(x_{1},x_{2},x_{3}\right)$
must also vanish on ${\bf V}\left(f_{1},f_{2},f_{3},f_{4}\right)$.

Since each $f_{i}$ is a quadric, the first three columns of $U$ must
consist of linear entries. 
In particular, the first three columns have
entries  degree at most one in the $x's$. 
Since the last
column contains no $x's$, each entry in this column is of degree zero in the
$x's$. Thus, $F=\det U$ is at most degree three in the $x's$,
and
each $\psi_{ijk}$ is a cubic. 
\end{proof}
With the previously defined $\psi_{ijk}$ we will now be able to finish
our list of $20$ linearly independent cubics which each vanish on
${\bf V}\left(f_{1},f_{2},f_{3},f_{4}\right)$. 
Doing so allows us
to define the Dixon matrix of four quadrics. 

\begin{defn*}
Let $f_{1},f_{2},f_{3},f_{4}\in\mathbb{C}(A_1,\dots,A_m) \left[x_{1},x_{2},x_{3}\right]$ each be of degree $2$ or less. 
We define the $\emph{Dixon matrix}$ of $f_{1},f_{2},f_{3},f_{4}$
to be a matrix whose $i-th$ row is given by the coefficient vector
of $g_{i}$ where 
\[
\begin{array}{cccc}
g_{1}=f_{1}, & g_{2}=f_{2}, & g_{3}=f_{3}, & g_{4}=f_{4}\\
g_{5}=x_{1}f_{1}, & g_{6}=x_{1}f_{2}, & g_{7}=x_{1}f_{3}, & g_{8}=x_{1}f_{4}\\
g_{9}=x_{2}f_{1}, & g_{10}=x_{2}f_{2}, & g_{11}=x_{2}f_{3}, & g_{12}=x_{2}f_{4}\\
g_{13}=x_{3}f_{1}, & g_{14}=x_{3}f_{2}, & g_{15}=x_{3}f_{3}, & g_{16}=x_{3}f_{4}\\
g_{17}=\psi_{000}, & g_{18}=\psi_{101}, & g_{19}=\psi_{010}, & g_{20}=\psi_{001}
\end{array}.
\]
 We denote the Dixon matrix as $Dix\left(f_{1},f_{2},f_{3},f_{4}\right)$. 
To distinguish between parameters and unknowns we may notate the Dixon matrix as $Dix\left(f_{1},f_{2},f_{3},f_{4};x_1,x_2,x_3\right)$.

\end{defn*}
Note that permuting the order of the unknowns permutes the columns of the Dixon matrix, and permuting 
the order of the $f_{i}$'s permutes  the rows of the Dixon matrix. 
So up to
permutation of rows or columns the Dixon matrix is well defined. But
because singularity of a matrix is invariant under permutation of
rows or columns then this causes no change in the determinant (up
to scalar constant) of $Dix\left(f_{1},f_{2},f_{3},f_{4}\right).$

\begin{thm}
Suppose we are given four quadrics in three unknowns,
$$f_1,f_2,f_3\in\mathbb{C}(A_1,\dots,A_m)[x_1,x_2,x_3]$$ 
Then the Dixon matrix $Dix\left(f_{1},f_{2},f_{3},f_{4}\right)$ is singular
if ${\bf V}\left(\langle f_{1},f_{2},f_{3},f_{4}\rangle\right)\neq\emptyset$.  The determinant of the Dixon matrix vanishes if there exists a solution to the system. 
 \end{thm}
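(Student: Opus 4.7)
The plan is to exhibit an explicit non-zero vector in the kernel of $Dix(f_1,f_2,f_3,f_4)$ whenever the variety is non-empty, from which singularity follows immediately.

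First, I would pick any common zero $(s_1,s_2,s_3)\in{\bf V}(\langle f_1,f_2,f_3,f_4\rangle)$ and let $[L](s_1,s_2,s_3)$ denote the column vector obtained by substituting $x_i=s_i$ into the monomial vector $[L]$ defined earlier. The key observation is that this evaluation vector is automatically non-zero, since its last entry is the constant $1$. Hence it suffices to show $Dix(f_1,f_2,f_3,f_4)\cdot [L](s_1,s_2,s_3)=0$, because a matrix with a non-trivial kernel vector must be singular.

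Next I would verify the product is zero row by row, using the defining property of the coefficient row vector: for any cubic $g$, one has $[C]\cdot[L](s_1,s_2,s_3)=g(s_1,s_2,s_3)$. Thus the $i$-th entry of $Dix(f_1,f_2,f_3,f_4)\cdot [L](s_1,s_2,s_3)$ equals $g_i(s_1,s_2,s_3)$, and I only have to check that each of the twenty $g_i$'s vanishes at $(s_1,s_2,s_3)$. For $g_1,\dots,g_4$ this is the hypothesis $(s_1,s_2,s_3)\in{\bf V}(f_1,f_2,f_3,f_4)$. For $g_5,\dots,g_{16}$, each is of the form $x_jf_i$, so its value at $(s_1,s_2,s_3)$ is $s_j f_i(s_1,s_2,s_3)=0$. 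For the last four cubics $g_{17},\dots,g_{20}$, which are among the $\psi_{ijk}$ with $i+j+k\le 1$, vanishing at $(s_1,s_2,s_3)$ is precisely the content of the preceding lemma.

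Putting these pieces together, $[L](s_1,s_2,s_3)$ is a non-zero vector annihilated by $Dix(f_1,f_2,f_3,f_4)$, so the Dixon matrix is singular and its determinant vanishes, giving both conclusions of the theorem. The only subtlety worth being careful about is the invariance of the argument under permutation of rows and columns (already noted after the definition of the Dixon matrix), which ensures the ordering of the $g_i$'s and of the monomials in $[L]$ do not matter. I do not anticipate a real obstacle, since the hard technical work, namely showing that the four auxiliary cubics $\psi_{ijk}$ vanish on the variety, was already accomplished in the lemma; this theorem is essentially a clean repackaging of that fact in matrix form.
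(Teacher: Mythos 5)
Your proposal is correct and follows essentially the same argument as the paper: evaluate the monomial vector $[L]$ at a common zero, observe that the Dixon matrix sends it to the vector of values $g_i(s_1,s_2,s_3)$, which all vanish (the last four by the preceding lemma), giving a non-trivial kernel. You are slightly more explicit than the paper in noting that the evaluated vector is non-zero because its last entry is $1$, which is a worthwhile detail but not a different method.
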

\begin{proof}
If $\left(\xi_{1},\xi_{2},\xi_{3}\right)\in{\bf V}\left(\langle f_{1},f_{2},f_{3},f_{4}\rangle\right)$
then $Dix\left(f_{1},f_{2},f_{3},f_{4}\right)$ maps 
\[
\left[\xi_{1}^{3},\xi_{1}^{2}\xi_{2},\xi_{1}^{2}\xi_{3},\xi_{1}\xi_{2}^{2},\xi_{1}\xi_{2}\xi_{3},\xi_{1}\xi_{3}^{2},\xi_{2}^{3},\xi_{2}^{2}\xi_{3},\xi_{2}\xi_{3}^{2},\xi_{3}^{3},\xi_{1}^{2},\xi_{1}\xi_{2},\xi_{1}\xi_{3},\xi_{2}^{2},\xi_{2}\xi_{3},\xi_{3},\xi_{1},\xi_{2},\xi_{3},1\right]^T
\]
 to zero. 
This is because the Dixon matrix was constructed using coefficient vectors of $g_i$, so $Dix\left(f_{1},f_{2},f_{3},f_{4}\right)\cdot [L]$ equals 
$[g_1(x_1.x_2,x_3),g_2(x_1.x_2,x_3),\cdots,g_{20}(x_1.x_2,x_3)]^T$.  
This is the zero vector when  $(x_1,x_2,x_3)=(\xi_1.\xi_2,\xi_3)\in {\bf V}\left(\langle f_{1},f_{2},f_{3},f_{4}\rangle\right)$.
So the kernel is non-trivial and the matrix is singular.
\end{proof}
The next theorem shows how to use the technique in the case where
$f_1,f_2,f_3,f_4\in\mathbb{C}[q_0,q_1,q_2,q_3]$ are quadrics in four unknowns. To define the Dixon matrix of four quadrics in four unknowns we specify three of the unknowns and consider the fourth as a parameter like $A_i$. So the determinant of 
$Dix\left(f_{1},f_{2},f_{3},f_{4};q_{1},q_2,q_3\right)$   will be an element of 
$\mathbb{C}(A_1,\dots,A_m,q_0)$. In fact, 
$Dix\left(f_{1},f_{2},f_{3},f_{4};q_{1},q_2,q_3\right)$  will be an element of 
$\mathbb{C}(A_1,\dots,A_m)[q_0]$.

\begin{thm}
Suppose we are given four quadrics in four unknowns
$$f_1,f_2,f_3,f_4\in\mathbb{C}(A_1,\dots,A_m)[q_0,q_1,q_2,q_3].$$
If $\left(s_{0},s_{1},s_{2},s_{3}\right)\in{\bf V}\left(\langle f_{1},f_{2},f_{3},f_{4}\rangle\right)$
then the univariate polynomial 
$$\det Dix\left(f_{1},f_{2},f_{3},f_{4};q_{1},q_2,q_3\right)\in\mathbb{C}(A_1,\dots,A_m)[q_0]$$ vanishes
at $s_{0}$.
\end{thm}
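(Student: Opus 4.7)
The plan is to reduce the four-unknown setting to the three-unknown theorem just proved, by regarding $q_0$ as an additional parameter. Concretely, I would view each $f_i$ as an element of $\mathbb{C}(A_1,\ldots,A_m,q_0)[q_1,q_2,q_3]$, that is, as a quadric in the three unknowns $q_1,q_2,q_3$ over the enlarged coefficient field obtained by adjoining $q_0$ to the symbolic parameters. The Dixon construction of the preceding section then produces $Dix(f_1,f_2,f_3,f_4;q_1,q_2,q_3)$ with entries in $\mathbb{C}(A_1,\ldots,A_m,q_0)$, exactly as defined.

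Next, I would specialize $q_0$ to the given value $s_0$. Setting $\tilde f_i(q_1,q_2,q_3) := f_i(s_0,q_1,q_2,q_3)$ for $i=1,\ldots,4$ yields four quadrics in three unknowns over $\mathbb{C}(A_1,\ldots,A_m)$, and the hypothesis that $(s_0,s_1,s_2,s_3)\in \mathbf{V}(\langle f_1,f_2,f_3,f_4\rangle)$ becomes $(s_1,s_2,s_3)\in \mathbf{V}(\langle \tilde f_1,\tilde f_2,\tilde f_3,\tilde f_4\rangle)$. In particular this latter variety is non-empty, so the preceding theorem applies and gives $\det Dix(\tilde f_1,\tilde f_2,\tilde f_3,\tilde f_4;q_1,q_2,q_3)=0$.

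The remaining step is to verify that the Dixon construction commutes with the specialization homomorphism $q_0\mapsto s_0$. Every ingredient of the construction --- the matrix $U$, the linear quotients $h_{ij}$, the determinant $F=\det U$, the extraction of the coefficients $\psi_{000},\psi_{101},\psi_{010},\psi_{001}$ of $F$ regarded as a polynomial in $y_1,y_2,y_3$, and the assembly of the $20\times 20$ coefficient matrix --- is produced by polynomial operations in the coefficients of the $f_i$. The only non-obvious ingredient is the exact division of $f_i(y_1,\ldots,x_j,\ldots)-f_i(y_1,\ldots,y_j,\ldots)$ by $x_j-y_j$, but this quotient is polynomial for the reason already recorded in the derivation of $U$. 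Ring homomorphisms preserve such polynomial operations, so substituting $q_0=s_0$ in $Dix(f_1,f_2,f_3,f_4;q_1,q_2,q_3)$ yields exactly $Dix(\tilde f_1,\tilde f_2,\tilde f_3,\tilde f_4;q_1,q_2,q_3)$, whence
\[
\det Dix(f_1,f_2,f_3,f_4;q_1,q_2,q_3)\Big|_{q_0=s_0}=\det Dix(\tilde f_1,\tilde f_2,\tilde f_3,\tilde f_4;q_1,q_2,q_3)=0.
\]

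Finally, since the coefficients of the $f_i$ are polynomial in $q_0$ and the Dixon construction is polynomial in those coefficients, the determinant lies in $\mathbb{C}(A_1,\ldots,A_m)[q_0]$ as stated, and it vanishes at $s_0$. The main obstacle is really the bookkeeping --- tracking the polynomiality in $q_0$ and verifying the commutation with specialization at each stage of the construction --- rather than any new algebraic idea beyond the three-unknown theorem and the specialization principle.
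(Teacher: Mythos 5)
Your proposal is correct and follows essentially the same route as the paper: specialize $q_{0}$ to $s_{0}$ and invoke the three-unknown theorem, so that the specialized Dixon matrix is singular and hence the determinant, viewed as a polynomial in $q_{0}$, vanishes at $s_{0}$. The only difference is that you spell out the (true, and worth noting) commutation of the Dixon construction with the specialization $q_{0}\mapsto s_{0}$, a point the paper's proof leaves implicit.
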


\begin{proof}
By setting $q_{0}$ to equal $s_{0}$, the matrix $Dix\left(f_{1},f_{2},f_{3},f_{4};q_{1},q_2,q_3\right)$  maps a nonzero vector to zero as in the previous theorem. 
This means its determinant  of $Dix\left(f_{1},f_{2},f_{3},f_{4};q_{1},q_2,q_3\right)$
must vanish at $s_0$. 
\end{proof}

With this theorem we have that ${\bf V}\left(\langle f_{1},f_{2},f_{3},f_{4}\rangle\right)\subset{\bf V}\left( \det Dix\left(f_{1},f_{2},f_{3},f_{4};q_{1},q_2,q_3\right)\right)$.
By permuting $q_0$ with the other three $q_i$ we conclude the following corollary.

\begin{cor}
If $f_1,f_2,f_3,f_4\in\mathbb{C}[q_0,q_1,q_2,q_3]$   are quadrics, then 
${\bf V}\left(\langle f_{1},f_{2},f_{3},f_{4}\rangle\right)$ is contained in 
$$\begin{array}{c}
{\bf V}(\det Dix\left(f_{1},f_{2},f_{3},f_{4};q_{1},q_{2},q_{3}\right),\det Dix\left(f_{1},f_{2},f_{3},f_{4};q_{0},q_{2},q_{3}\right),\\
\det Dix\left(f_{1},f_{2},f_{3},f_{4};q_{1},q_{0},q_{3}\right),\det Dix\left(f_{1},f_{2},f_{3},f_{4};q_{1},q_{2},q_{0}\right)).
\end{array}$$
\end{cor}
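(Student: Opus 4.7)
The plan is to reduce the corollary to four applications of the preceding theorem, exploiting the fact that the construction of the Dixon matrix treats the three chosen unknowns symmetrically against the distinguished ``parameter'' unknown.

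First I would observe that in the preceding theorem the role of $q_0$ is purely notational: nothing in the construction of $Dix(f_1,f_2,f_3,f_4;q_1,q_2,q_3)$ or in the proof used the label $q_0$ specifically, only that $q_0$ was treated as a parameter while the Dixon matrix was built in the remaining three unknowns. So by relabeling, for each index $j\in\{0,1,2,3\}$ one obtains that if we set $\{a,b,c\}=\{0,1,2,3\}\setminus\{j\}$, then viewing $f_1,\dots,f_4$ as quadrics in $\mathbb{C}(A_1,\dots,A_m,q_j)[q_a,q_b,q_c]$, the preceding theorem applies and yields the univariate polynomial $\det Dix(f_1,f_2,f_3,f_4;q_a,q_b,q_c)\in\mathbb{C}(A_1,\dots,A_m)[q_j]$.

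Next I would take a solution $(s_0,s_1,s_2,s_3)\in\mathbf{V}(\langle f_1,f_2,f_3,f_4\rangle)$ and apply the preceding theorem four times, once for each choice of distinguished variable. Each application gives that the corresponding univariate Dixon determinant, regarded as a polynomial in $q_j$, vanishes at $s_j$. Equivalently, viewing each $\det Dix(f_1,f_2,f_3,f_4;q_a,q_b,q_c)$ as a polynomial in $\mathbb{C}(A_1,\dots,A_m)[q_0,q_1,q_2,q_3]$ (it happens to depend on $q_j$ alone), it vanishes at the full point $(s_0,s_1,s_2,s_3)$. Therefore this point lies in the common zero locus of all four Dixon determinants, which is precisely the variety on the right-hand side of the containment in the corollary statement.

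Since there is essentially no obstacle beyond invoking the preceding theorem symmetrically, the only point requiring care is the justification that the construction of the Dixon matrix really is symmetric under relabeling of the four quaternion unknowns, i.e.\ that designating a different $q_j$ as the parameter produces a valid Dixon matrix for the three remaining unknowns in the same way. This follows because the Dixon construction only used the ring $\mathbb{C}(A_1,\dots,A_m)[x_1,x_2,x_3]$ as an abstract polynomial ring in three variables, and absorbing $q_j$ into the coefficient field $\mathbb{C}(A_1,\dots,A_m,q_j)$ reduces the four--unknown case to the setting already handled.
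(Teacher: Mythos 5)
Your argument is correct and is exactly the paper's: the paper deduces the corollary from the preceding theorem by permuting $q_0$ with each of the other $q_i$, which is precisely your fourfold application of the theorem with a relabeled distinguished unknown. No gap here.
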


\section{Case study: The $3-\hat{\underline{R}}(RRR)_E\hat{R}$ architecture}


\begin{figure} [t]
\centering
  \includegraphics[width=90mm]{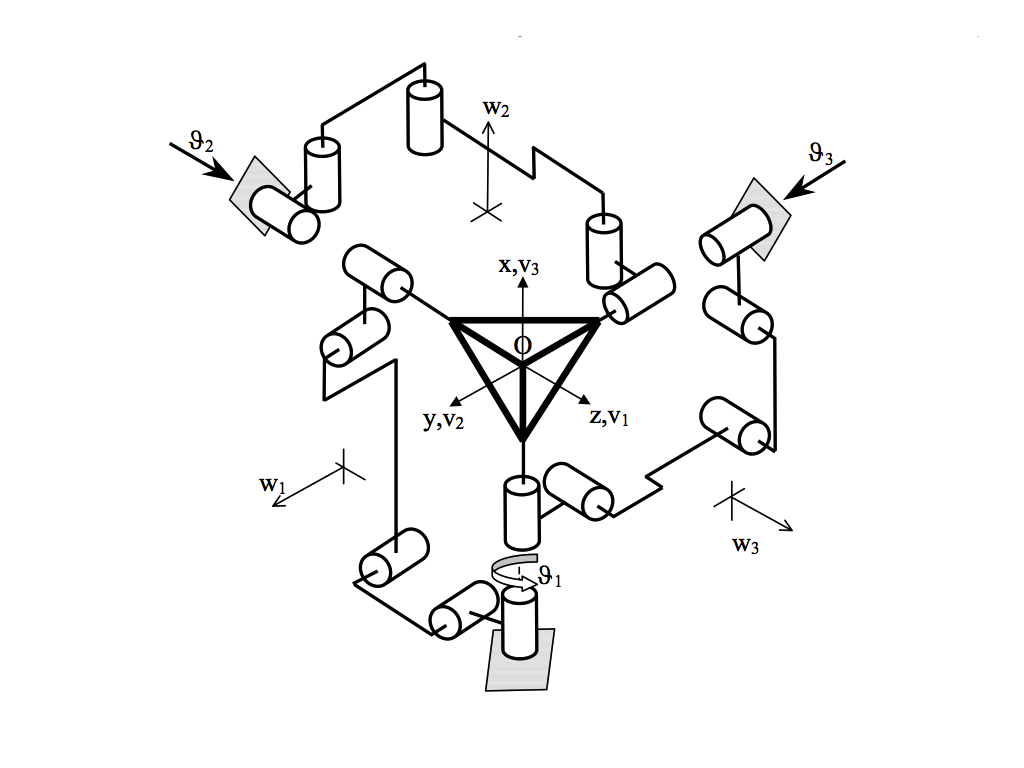}
\caption{The $3-\hat{\underline{R}}(RRR)_E\hat{R}$ SPM.}
\label{manipulator_geometry}
\end{figure}

The $3-\hat{\underline{R}}(RRR)_E\hat{R}$ SPM (Figure~\ref{manipulator_geometry}) was first proposed in~\cite{Karouia2000}. According to the reference systems in Figure~\ref{manipulator_geometry}, the kinematics problem may be rephrased as solving a system of equations: 

\[
\begin{array}{ccc}
f_{1} & = & A_{1}\left(q{}_{0}^{2}-q_{1}^{2}-q_{2}^{2}+q_{3}^{2}\right)+2B_{1}\left(q_{2}q_{3}-q_{0}q_{1}\right)\\
f_{2} & = & A_{2}\left(q{}_{0}^{2}-q_{1}^{2}+q_{2}^{2}-q_{3}^{2}\right)+2B_{2}\left(q_{1}q_{2}-q_{0}q_{3}\right)\\
f_{3} & = & A_{3}\left(q{}_{0}^{2}+q_{1}^{2}-q_{2}^{2}-q_{3}^{2}\right)+2B_{3}\left(q_{1}q_{3}-q_{0}q_{2}\right)\\
f_{4} & = & q_{0}^{2}+q_{1}^{2}+q_{2}^{2}+q_{3}^{2}-1
\end{array}
\]
with $A_i=\sin(\theta_i)$ and  $B_i=\cos(\theta_i)$ by taking 
$$\mbox{\ensuremath{\mbox{\ensuremath{\left[\begin{array}{ccc}
  &   &  \\
w_{1}  &  w_{2}  &  w_{3}\\
 &   &  
\end{array}\right]} }}}=\left[\begin{array}{ccc}
0 & B_{2} & A_{3}\\
B_{1} & A_{2} & 0\\
A_{1} & 0 & B_{3}
\end{array}\right],\mbox{ \,\ \,\  }\mbox{\ensuremath{\left[\begin{array}{ccc}
  &   &  \\
\nu_{1}  &  \nu_{2}  &  \nu_{3}\\
 &   &  
\end{array}\right]} }=\left[\begin{array}{ccc}
0 & 0 & 1\\
0 & 1 & 0\\
1 & 0 & 0
\end{array}\right]$$
in Equation (~\ref{EQ_geometrical_condition}).
The $\theta_i$'s are the motor angles.
 We say $\left(s_{0},s_{1},s_{2},s_{3}\right)\in\mathbb{C}^{4}$ is
a solution of the $3-\hat{\underline{R}}(RRR)_E\hat{R}$ system if $f_{1}\left(s_{0},s_{1},s_{2},s_{3}\right)=\cdots=f_{4}\left(s_{0},s_{1},s_{2},s_{3}\right)=0.$ 
\begin{lem}
The $q_{0}$ coordinates of the solutions to the $3-\hat{\underline{R}}(RRR)_E\hat{R}$ system are
roots of a degree $16$ polynomial in the unknown $q_{0}$
with parameters in $A_{1},A_{2},A_{3},B_{1},B_{2},B_{3}$ . This polynomial
is the determinant of a Dixon matrix $Dix\left(f_{1},f_{2},f_{3},f_{4};q_{1},q_2,q_3\right)$ and solvable in radicals:
\[
\begin{array}{l}
\det Dix\left(f_{1},f_{2},f_{3},f_{4};q_{1},q_2,q_3\right)  =  \left(2q_{0}-1\right)^{4}\left(2q_{0}+1\right)^{4}\cdot G\left(q_{0}\right)\\
G\left(q_0\right)  =  -16777216w^{2}q_0^{8}+16777216w^{2}q_0^{6}-2097152w\cdot w_{1}q_0^{4}+w\cdot w_{2}q_0^{2}+w_{3}
\end{array}
\]
with {\tiny 
\[
\begin{array}{lll}
w & = & (A_{2}^{2}A_{3}^{2}+A_{2}^{2}B_{3}^{2}+B_{2}^{2}B_{3}^{2})(A_{1}^{2}A_{3}^{2}+B_{1}^{2}A_{3}^{2}+B_{1}^{2}B_{3}^{2})(A_{1}^{2}A_{2}^{2}+A_{1}^{2}B_{2}^{2}+B_{1}^{2}B_{2}^{2})\\
w_{1} & = & (3A_{1}^{4}A_{2}^{4}A_{3}^{4}+2A_{1}^{2}B_{1}^{2}A_{2}^{4}A_{3}^{4}+2A_{1}^{4}A_{2}^{2}B_{2}^{2}A_{3}^{4}+5A_{1}^{2}B_{1}^{2}A_{2}^{2}B_{2}^{2}A_{3}^{4}+3B_{1}^{4}A_{2}^{2}B_{2}^{2}A_{3}^{4}+2A_{1}B_{1}^{3}A_{2}^{3}B_{2}A_{3}^{3}B_{3}+2A_{1}^{3}B_{1}A_{2}B_{2}^{3}A_{3}^{3}B_{3}+\\
 &  & +3A_{1}^{4}B_{2}^{4}A_{3}^{2}B_{3}^{2}+2A_{1}B_{1}^{3}A_{2}B_{2}^{3}A_{3}^{3}B_{3}+2A_{1}^{4}A_{2}^{4}A_{3}^{2}B_{3}^{2}+5A_{1}^{2}B_{1}^{2}A_{2}^{4}A_{3}^{2}B_{3}^{2}+5A_{1}^{4}A_{2}^{2}B_{2}^{2}A_{3}^{2}B_{3}^{2}+9A_{1}^{2}B_{1}^{2}A_{2}^{2}B_{2}^{2}A_{3}^{2}B_{3}^{2}+\\
 &  & +5B_{1}^{4}A_{2}^{2}B_{2}^{2}A_{3}^{2}B_{3}^{2}+5A_{1}^{2}B_{1}^{2}B_{2}^{4}A_{3}^{2}B_{3}^{2}+2B_{1}^{4}B_{2}^{4}A_{3}^{2}B_{3}^{2}+2A_{1}^{3}B_{1}A_{2}^{3}B_{2}A_{3}B_{3}^{3}+2A_{1}B_{1}^{3}A_{2}^{3}B_{2}A_{3}B_{3}^{3}+2A_{1}^{3}B_{1}A_{2}B_{2}^{3}A_{3}B_{3}^{3}+\\
 &  & +6A_{1}B_{1}^{3}A_{2}B_{2}^{3}A_{3}B_{3}^{3}+3A_{1}^{2}B_{1}^{2}A_{2}^{4}B_{3}^{4}+5A_{1}^{2}B_{1}^{2}A_{2}^{2}B_{2}^{2}B_{3}^{4}+2B_{1}^{4}A_{2}^{2}B_{2}^{2}B_{3}^{4}+2A_{1}^{2}B_{1}^{2}B_{2}^{4}B_{3}^{4})\\
w_{2} & = & (A_{1}^{4}A_{2}^{4}A_{3}^{4}+A_{1}^{2}B_{1}^{2}A_{2}^{2}B_{2}^{2}A_{3}^{4}+B_{1}^{4}A_{2}^{2}B_{2}^{2}A_{3}^{4}-2A_{1}^{3}B_{1}A_{2}^{3}B_{2}A_{3}^{3}B_{3}+2A_{1}B_{1}^{3}A_{2}^{3}B_{2}A_{3}^{3}B_{3}+2A_{1}^{3}B_{1}A_{2}B_{2}^{3}A_{3}^{3}B_{3}+\\
 &  & +2A_{1}B_{1}^{3}A_{2}B_{2}^{3}A_{3}^{3}B_{3}+A_{1}^{2}B_{1}^{2}A_{2}^{4}A_{3}^{2}B_{3}^{2}+A_{1}^{4}A_{2}^{2}B_{2}^{2}A_{3}^{2}B_{3}^{2}+7A_{1}^{2}B_{1}^{2}A_{2}^{2}B_{2}^{2}A_{3}^{2}B_{3}^{2}+B_{1}^{4}A_{2}^{2}B_{2}^{2}A_{3}^{2}B_{3}^{2}+A_{1}^{4}B_{2}^{4}A_{3}^{2}B_{3}^{2}+\\
 &  & +A_{1}^{2}B_{1}^{2}B_{2}^{4}A_{3}^{2}B_{3}^{2}+2A_{1}^{3}B_{1}A_{2}^{3}B_{2}A_{3}B_{3}^{3}2A_{1}B_{1}^{3}A_{2}^{3}B_{2}A_{3}B_{3}^{3}+2A_{1}^{3}B_{1}A_{2}B_{2}^{3}A_{3}B_{3}^{3}+A_{1}^{2}B_{1}^{2}A_{2}^{4}B_{3}^{4}+A_{1}^{2}B_{1}^{2}A_{2}^{2}B_{2}^{2}B_{3}^{4})\\
w_{3} & = & (-A_{1}^{4}A_{2}^{4}A_{3}^{4}+A_{1}^{2}B_{1}^{2}A_{2}^{2}B_{2}^{2}A_{3}^{4}+B_{1}^{4}A_{2}^{2}B_{2}^{2}A_{3}^{4}+4A_{1}^{3}B_{1}A_{2}^{3}B_{2}A_{3}^{3}B_{3}+2A_{1}B_{1}^{3}A_{2}^{3}B_{2}A_{3}^{3}B_{3}+\\
 &  & +2A_{1}^{3}B_{1}A_{2}B_{2}^{3}A_{3}^{3}B_{3}+A_{1}^{4}B_{2}^{4}A_{3}^{2}B_{3}^{2}+2A_{1}B_{1}^{3}A_{2}B_{2}^{3}A_{3}^{3}B_{3}+A_{1}^{2}B_{1}^{2}A_{2}^{4}A_{3}^{2}B_{3}^{2}+A_{1}^{4}A_{2}^{2}B_{2}^{2}A_{3}^{2}B_{3}^{2}+A_{1}^{2}B_{1}^{2}A_{2}^{2}B_{2}^{2}A_{3}^{2}B_{3}^{2}+\\
 &  & +B_{1}^{4}A_{2}^{2}B_{2}^{2}A_{3}^{2}B_{3}^{2}+A_{1}^{2}B_{1}^{2}B_{2}^{4}A_{3}^{2}B_{3}^{2}+2A_{1}^{3}B_{1}A_{2}^{3}B_{2}A_{3}B_{3}^{3}+2A_{1}B_{1}^{3}A_{2}^{3}B_{2}A_{3}B_{3}^{3}+2A_{1}^{3}B_{1}A_{2}B_{2}^{3}A_{3}B_{3}^{3}+2A_{1}B_{1}^{3}A_{2}B_{2}^{3}A_{3}B_{3}^{3}+\\
 &  & +A_{1}^{2}B_{1}^{2}A_{2}^{4}B_{3}^{4}+A_{1}^{2}B_{1}^{2}A_{2}^{2}B_{2}^{2}B_{3}^{4})
\end{array}
\]
} \end{lem}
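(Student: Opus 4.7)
The plan is to apply the preceding theorem in order to reduce the lemma to a concrete symbolic computation: once we know that the $q_0$-coordinates of the solutions lie in $\mathbf{V}(\det Dix(f_1,f_2,f_3,f_4;q_1,q_2,q_3))$, it suffices to compute this determinant, verify its factorization, and check that the nontrivial factor is solvable in radicals.

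First I would assemble the $20\times 20$ Dixon matrix for the $3-\hat{\underline{R}}(RRR)_E\hat{R}$ system. Rows $1$ through $16$ are obtained directly from the coefficient vectors of $f_i$, $q_1 f_i$, $q_2 f_i$, $q_3 f_i$ ($i=1,\dots,4$), viewed as cubics in $q_1,q_2,q_3$ with coefficients in $\mathbb{C}(A_1,\dots,B_3)[q_0]$. The last four rows come from forming the auxiliary $4\times 4$ matrix $U$ with entries built from the $f_i$ and the auxiliary variables $y_1,y_2,y_3$, expanding $\det U$, and reading off the coefficients $\psi_{000},\psi_{101},\psi_{010},\psi_{001}$ as prescribed in the Definition. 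Because $f_1,f_2,f_3$ each have only four nonzero monomials and $f_4$ is the norm constraint, each entry of the Dixon matrix is a short polynomial in $q_0$ and the $A_i,B_i$, which makes the setup tractable.

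Next I would compute $\det Dix(f_1,f_2,f_3,f_4;q_1,q_2,q_3)$ as a polynomial in $q_0$. By Theorem~2, every solution $q_0 = s_0$ of the original kinematic system must be a root of this determinant, so it suffices to verify that the result matches the displayed expression $(2q_0-1)^4(2q_0+1)^4\,G(q_0)$ with $G$ of the stated form. To confirm the factor $(2q_0-1)^4$, I would substitute $q_0=1/2$ into the determinant and into its first three derivatives with respect to $q_0$, and check that each vanishes identically in the parameters $A_i,B_i$; the factor $(2q_0+1)^4$ is handled analogously via $q_0=-1/2$. Dividing out these eight linear factors then leaves a polynomial of degree $8$ in $q_0$ whose coefficients, after collection, must match $w$, $w_1$, $w_2$, $w_3$ exactly as written; this is a direct term-by-term comparison.

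Finally, solvability in radicals follows from the observation that $G(q_0)$ contains only even powers of $q_0$, so the substitution $t = q_0^2$ turns it into the quartic
\[
\widetilde{G}(t) = -16777216\,w^2 t^4 + 16777216\,w^2 t^3 - 2097152\,w\,w_1 t^2 + w\,w_2\,t + w_3 ,
\]
which is solvable by Ferrari's formula; taking square roots of the four resulting values of $t$ yields the eight roots of $G$ in radicals, and the eight roots coming from $(2q_0\mp 1)^4$ are already rational. The main obstacle is the raw size of the symbolic determinant: the computation is routine but only practical with a computer algebra system, and one must take care that the spurious factors $(2q_0\mp 1)^4$ (which arise from the extraneous projective content of the Dixon construction rather than from the original kinematics) are identified and removed before comparing coefficients.
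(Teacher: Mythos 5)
Your proposal is correct and follows essentially the same route as the paper: build the $20\times20$ Dixon matrix from $f_i$, $q_jf_i$, and the $\psi_{ijk}$ with $i+j+k\le 1$, then compute its determinant symbolically (the paper does this in Macaulay2, using the shortcut of killing higher-order terms in the auxiliary variables) and read off the factorization $(2q_0-1)^4(2q_0+1)^4G(q_0)$. Your added observation that $G$ is even in $q_0$, hence a quartic in $t=q_0^2$ solvable by Ferrari, makes the ``solvable in radicals'' claim explicit, which the paper leaves implicit; note only that the Definition's $\psi_{101}$ is a typo for $\psi_{100}$, which is what the construction (and the paper's proof) actually uses.
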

\begin{proof}
We want to construct the Dixon matrix $Dix\left(f_{1},f_{2},f_{3},f_{4};q_{1},q_{2},q_{3}\right)$
of the system with respect to $q_{0}$ as its determinant vanishes on the $q_0$   coordinates of the solutions.
We easily get the entries
of the first $16$ rows from the coefficients of $f_{1},q_{1}f_{2},q_{2}f_{1},q_{3}f_{1},\dots,f_{4},q_{1}f_{4},q_{2}f_{4},q_{3}f_{4}$,
and the last $4$ rows come from the coefficients of $\psi_{000},\psi_{001},\psi_{010},\psi_{100}$.
To compute these we take the determinant of the matrix
\begin{equation}
U:=\left[\begin{array}{cccc}
\frac{f_{1}\left(x_{1},x_{2},x_{3}\right)-f_{1}\left(y_{1},x_{2},x_{3}\right)}{\left(x_{1}-y_{1}\right)}, & \frac{f_{1}\left(y_{1},x_{2},x_{3}\right)-f_{1}\left(y_{1},y_{2},x_{3}\right)}{\left(x_{2}-y_{2}\right)}, & \frac{f_{1}\left(y_{1},y_{2},x_{3}\right)-f_{1}\left(y_{1},y_{2},y_{3}\right)}{\left(x_{3}-y_{3}\right)}, & f_{1}\left(y_{1},y_{2},y_{3}\right)\\
\frac{f_{2}\left(x_{1},x_{2},x_{3}\right)-f_{2}\left(y_{1},x_{2},x_{3}\right)}{\left(x_{1}-y_{1}\right)}, & \frac{f_{2}\left(y_{1},x_{2},x_{3}\right)-f_{2}\left(y_{1},y_{2},x_{3}\right)}{\left(x_{2}-y_{2}\right)}, & \frac{f_{2}\left(y_{1},y_{2},x_{3}\right)-f_{2}\left(y_{1},y_{2},y_{3}\right)}{\left(x_{3}-y_{3}\right)}, & f_{2}\left(y_{1},y_{2},y_{3}\right)\\
\frac{f_{3}\left(x_{1},x_{2},x_{3}\right)-f_{3}\left(y_{1},x_{2},x_{3}\right)}{\left(x_{1}-y_{1}\right)}, & \frac{f_{3}\left(y_{1},x_{2},x_{3}\right)-f_{3}\left(y_{1},y_{2},x_{3}\right)}{\left(x_{2}-y_{2}\right)}, & \frac{f_{3}\left(y_{1},y_{2},x_{3}\right)-f_{3}\left(y_{1},y_{2},y_{3}\right)}{\left(x_{3}-y_{3}\right)}, & f_{3}\left(y_{1},y_{2},y_{3}\right)\\
\frac{f_{4}\left(x_{1},x_{2},x_{3}\right)-f_{4}\left(y_{1},x_{2},x_{3}\right)}{\left(x_{1}-y_{1}\right)}, & \frac{f_{4}\left(y_{1},x_{2},x_{3}\right)-f_{4}\left(y_{1},y_{2},x_{3}\right)}{\left(x_{2}-y_{2}\right)}, & \frac{f_{4}\left(y_{1},y_{2},x_{3}\right)-f_{4}\left(y_{1},y_{2},y_{3}\right)}{\left(x_{3}-y_{3}\right)}, & f_{4}\left(y_{1},y_{2},y_{3}\right)
\end{array}\right]. \nonumber
\end{equation}
We write $\det U=\sum\psi_{ijk}(q_{1},q_{2},q_{3},A_{1},A_{2},A_{3},B_{1},B_{2},B_{3},q_{0})a^{i}b^{j}c^{k}$.
Since  we are only interested in the $\psi$ where $i+j+k\leq1$ then to increase computation performance we take $a^{2}=b^{2}=c^{2}=ab=ac=bc=0$. The exact formula was found using Macaulay2.
We now
have a matrix whose determinant is up to scalar, $\det Dix\left(f_{1},f_{2},f_{3},f_{4};q_{1},q_2,q_3\right)=\left(2q_{0}-1\right)^{4}\left(2q_{0}+1\right)^{4}\cdot G\left(q_{0}\right)$.
\end{proof}

Repeating the computations for $1\leq i\leq3$, we see there are symmetries in our system. As up to scalar and change of variables,
$ \det Dix\left(f_{1},f_{2},f_{3},f_{4};q_{1},q_2,q_3\right)$,
$ \det Dix\left(f_{1},f_{2},f_{3},f_{4};q_{0},q_2,q_3\right)$,
$ \det Dix\left(f_{1},f_{2},f_{3},f_{4};q_{1},q_0,q_3\right)$, 
and
$ \det Dix\left(f_{1},f_{2},f_{3},f_{4};q_{1},q_2,q_0\right)$ 
are equal. 
These calculations give us insight to prove the next lemma.

\begin{lem}
If $\sigma_{1}=\left(s_{0},s_{1},s_{2},s_{3}\right)\in{\bf V}\left(\langle f_{1},f_{2},f_{3},f_{4}\rangle\right)$
then 
\[
{\footnotesize
\begin{array}{cccc}
\sigma_{1}=\left(s_{0},s_{1},s_{2},s_{3}\right) & \sigma_{2}=\left(s_{1},-s_{0},-s_{3},s_{2}\right) & \sigma_{3}=\left(s_{2},s_{3},-s_{0},-s_{1}\right) & \sigma_{4}=\left(s_{3},-s_{2},s_{1},-s_{0}\right)\\
-\sigma_{1}=\left(-s_{0},-s_{1},-s_{2},-s_{3}\right) & -\sigma_{2}=\left(-s_{1},s_{0},s_{3},-s_{2}\right) & -\sigma_{3}=\left(-s_{2},-s_{3},s_{0},s_{1}\right) & -\sigma_{4}=\left(-s_{3},s_{2},-s_{1},s_{0}\right)
\end{array}
}
\]
are solutions to the $3-\hat{\underline{R}}(RRR)_E\hat{R}$ system. 
In addition The $3-\hat{\underline{R}}(RRR)_E\hat{R}$ system
has $8$ distinct extraneous solutions. 
\end{lem}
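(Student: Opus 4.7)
The plan has two parts: verify that $\{\pm\sigma_1,\pm\sigma_2,\pm\sigma_3,\pm\sigma_4\}$ forms an orbit of solutions under a natural symmetry of the system, and separately exhibit eight extraneous solutions as the quaternions whose four components all equal $\pm 1/2$, subject to a product-of-signs condition. For the orbit, first observe that $f_1,f_2,f_3$ are homogeneous of degree two and $f_4$ is a sum of squares minus one; each is therefore invariant under $\mathcal{Q}\mapsto -\mathcal{Q}$, which reduces the claim to showing $\sigma_2,\sigma_3,\sigma_4$ are solutions. The most direct route is substitution: plugging $\sigma_2=(s_1,-s_0,-s_3,s_2)$ into the four polynomials and tracking signs gives $f_1(\sigma_2)=-f_1(\sigma_1)$, $f_2(\sigma_2)=-f_2(\sigma_1)$, $f_3(\sigma_2)=f_3(\sigma_1)$, $f_4(\sigma_2)=f_4(\sigma_1)$, all zero; parallel short calculations handle $\sigma_3$ and $\sigma_4$. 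Conceptually, $-\sigma_k$ is the quaternion $\mathcal{Q}\mathbf{e}_{k-1}$ for $k\in\{2,3,4\}$, so the eight orbit points are exactly $\mathcal{Q}\cdot Q_8$, where $Q_8=\{\pm 1,\pm\mathbf{e}_1,\pm\mathbf{e}_2,\pm\mathbf{e}_3\}$ acts by right-multiplication; since every $\mathbf{\nu}_i$ of the $3-\hat{\underline{R}}(RRR)_E\hat{R}$ system is a standard basis vector, right-multiplication by $\mathbf{e}_j$ post-composes $\mathbf{Q}$ with a coordinate half-turn, which changes $f_i=\mathbf{w}_i^{T}\mathbf{Q}\mathbf{\nu}_i$ at most by a sign.

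For the extraneous solutions I would introduce
\[
\Sigma=\bigl\{\bigl(\tfrac{\epsilon_0}{2},\tfrac{\epsilon_1}{2},\tfrac{\epsilon_2}{2},\tfrac{\epsilon_3}{2}\bigr):\epsilon_j\in\{\pm 1\},\ \epsilon_0\epsilon_1\epsilon_2\epsilon_3=+1\bigr\},
\]
a set of $2^4/2=8$ points. Every point of $\Sigma$ satisfies $f_4=4\cdot(1/4)-1=0$, and the $A_i$-part of each of $f_1,f_2,f_3$ (a signed sum of four copies of $1/4$) vanishes identically. The $B_i$-parts reduce to $\tfrac{B_1}{2}(\epsilon_2\epsilon_3-\epsilon_0\epsilon_1)$, $\tfrac{B_2}{2}(\epsilon_1\epsilon_2-\epsilon_0\epsilon_3)$, and $\tfrac{B_3}{2}(\epsilon_1\epsilon_3-\epsilon_0\epsilon_2)$, each of which vanishes precisely because $\epsilon_0\epsilon_1\epsilon_2\epsilon_3=+1$ together with $\epsilon_j^2=1$ forces all three pairwise-product identities. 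Hence $\Sigma$ consists of eight distinct common zeros of $f_1,\dots,f_4$, none of which depends on the $A_i,B_i$; that motor-angle independence justifies the term ``extraneous.'' For generic motor angles no component of $\sigma_1$ has absolute value $1/2$, so the orbit and $\Sigma$ are disjoint, yielding $8+8=16$ solutions in agreement with the Bezout bound for four quadrics in four unknowns (and in agreement with the factorization $(2q_0-1)^{4}(2q_0+1)^{4}G(q_0)$ from the previous lemma, whose degree-$8$ factor $G$ carries the orbit $q_0$-coordinates while the roots $q_0=\pm 1/2$ are those of $\Sigma$).

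The main obstacle I anticipate is not conceptual but arithmetic: the sign tracking in the substitutions for $\sigma_2,\sigma_3,\sigma_4$ amounts to roughly a dozen separate identities, and keeping them correctly paired with the correct $f_i$ is the one tedious step. The extraneous half, by contrast, reduces to the one-line combinatorial remark that the condition $\epsilon_0\epsilon_1\epsilon_2\epsilon_3=+1$ in $\{\pm 1\}^4$ cuts out precisely half the group and is simultaneously equivalent to all three required pairwise-product equalities, so both existence and the count of $8$ follow immediately.
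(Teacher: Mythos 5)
Your argument is correct; the verification of the orbit $\{\pm\sigma_k\}$ is essentially identical to the paper's (direct substitution with sign-tracking, done explicitly for $\sigma_2$, with the other cases analogous). The conceptual aside identifying the eight points as the right coset $\mathcal{Q}\cdot Q_8$ is a nice addition: right-multiplying $\mathcal{Q}$ by $\mathbf{e}_j$ pre-composes the rotation $\mathbf{Q}$ with a coordinate half-turn, and since each $\nu_i$ is a standard basis vector this multiplies $\mathbf{w}_i^T\mathbf{Q}\nu_i$ by $\pm 1$, so the substitution identities you compute are not accidental.

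For the extraneous solutions your route differs from the paper's in a useful way. The paper simply observes that $\rho_1=(1/2,1/2,1/2,1/2)$ is a solution for all $(A_i,B_i)$ and then invokes the just-proved orbit symmetry ("because it has one, it must have eight"), followed by an inspection that the eight orbit points are distinct. You instead directly characterize the parameter-independent locus as
\[
\Sigma=\bigl\{(\epsilon_0/2,\dots,\epsilon_3/2):\epsilon_j\in\{\pm1\},\ \epsilon_0\epsilon_1\epsilon_2\epsilon_3=+1\bigr\},
\]
and observe that the $A_i$-parts of $f_1,f_2,f_3$ vanish from $q_j^2\equiv 1/4$ while the $B_i$-parts vanish because each pairwise identity $\epsilon_a\epsilon_b=\epsilon_c\epsilon_d$ is equivalent to $\epsilon_0\epsilon_1\epsilon_2\epsilon_3=+1$; the count $|\Sigma|=2^4/2=8$ is then immediate. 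This buys you something the paper leaves implicit: your $\Sigma$ is not merely a set of eight extraneous solutions but is exactly the set of all solutions independent of $(A_i,B_i)$ (one can check the converse: forcing all four $A_i$-coefficients and all three $B_i$-coefficients to vanish along with $f_4=0$ gives $q_j^2=1/4$ and the product-of-signs constraint), which makes the word "extraneous" precise and explains why the count is exactly eight rather than at least eight. Your closing remark about genericity and Bezout is accurate context but not required by the lemma.
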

\begin{proof}
We show that if $\sigma_{1}=\left(s_{0},s_{1},s_{2},s_{3}\right)$
is a solution, then $\sigma_{2}=\left(s_{1},-s_{0},-s_{3},s_{2}\right)$
is a solution. Substituting $\sigma_{2}$ into our system gives 
\[
\begin{array}{ccc}
f_{1}\left(\sigma_{2}\right) & = & A_{1}\left((s_{1}){}^{2}-(-s_{0})^{2}-(-s_{3})^{2}+(s_{2})^{2}\right)+B_{1}2\left((-s_{3})s_{2}-s_{1}(-s_{0})\right)\\
f_{2}\left(\sigma_{2}\right) & = & A_{2}\left((s_{1}){}^{2}-(-s_{0})^{2}+(-s_{3})^{2}-(s_{2})^{2}\right)+B_{2}2\left((-s_{0})(-s_{3})-s_{1}s_{2}\right)\\
f_{3}\left(\sigma_{2}\right) & = & A_{3}\left((s_{1}){}^{2}+(-s_{0})^{2}-(-s_{3})^{2}-(s_{2})^{2}\right)+B_{3}2\left((-s_{0})s_{2}-s_{1}(-s_{3})\right)\\
f_{4}\left(\sigma_{2}\right) & = & (s_{1}){}^{2}+(-s_{0})^{2}+(-s_{3})^{2}+(s_{2})^{2}-1
\end{array}
\]
But after simplifying these equations we see $f_{1}\left(\sigma_{1}\right)=-f_{1}\left(\sigma_{2}\right)$,
$f_{2}\left(\sigma_{1}\right)=-f_{2}\left(\sigma_{2}\right)$ $f_{3}\left(\sigma_{1}\right)=f_{3}\left(\sigma_{2}\right)$,
and $f_{4}\left(\sigma_{1}\right)=f_{4}\left(\sigma_{2}\right)$.
So $\sigma_{2}$ is a solution. A similar argument shows the remaining
cases.

Now, we show the $3-\hat{\underline{R}}(RRR)_E\hat{R}$ system has $8$ distinct extraneous solutions.
Invariant of our choice of parameters the $3-\hat{\underline{R}}(RRR)_E\hat{R}$ system has an extraneous
solution $\rho_{1}=\left(\frac{1}{2},\frac{1}{2},\frac{1}{2},\frac{1}{2}\right)$.
But because it has one, it must have eight:
\[
{\footnotesize
\begin{array}{cccc}
\rho_{1}=\left(\frac{1}{2},\frac{1}{2},\frac{1}{2},\frac{1}{2}\right) & \rho_{2}=\left(\frac{1}{2},-\frac{1}{2},-\frac{1}{2},\frac{1}{2}\right) & \rho_{3}=\left(\frac{1}{2},\frac{1}{2},-\frac{1}{2},-\frac{1}{2}\right) & \rho_{4}=\left(\frac{1}{2},-\frac{1}{2},\frac{1}{2},-\frac{1}{2}\right)\\
-\rho_{1}=\left(-\frac{1}{2},-\frac{1}{2},-\frac{1}{2},-\frac{1}{2}\right) & -\rho_{2}=\left(-\frac{1}{2},\frac{1}{2},\frac{1}{2},-\frac{1}{2}\right) & -\rho_{3}=\left(-\frac{1}{2},-\frac{1}{2},\frac{1}{2},\frac{1}{2}\right) & -\rho_{4}=\left(-\frac{1}{2},\frac{1}{2},-\frac{1}{2},\frac{1}{2}\right)
\end{array}
}
\]

\end{proof}
With these results we can solve the $3-\hat{\underline{R}}(RRR)_E\hat{R}$ system in radicals: first, determine
the coordinates of the solutions using Dixon's determinant; second,
through substitutions find a non-extraneous solution.

\subsection{Numerical Example}

Specifying the coefficients in the case study as
\[
\begin{array}{cc}
A_{1}=3/5, & B_{1}=4/5\\
A_{2}=5/13, & B_{2}=12/13\\
A_{3}=7/25, & B_{3}=24/25
\end{array}
\]
then this instance of the $3-\hat{\underline{R}}(RRR)_E\hat{R}$ system has non extraneous solutions
\[	
\begin{array}{cccc}
\left(r_{1}r_{2},-r_{3},-r_{4}\right) & \left(r_{2},-r_{1},r_{4},-r_{3}\right) & \left(-r_{3},-r_{4},-r_{1},-r_{2}\right) & \left(-r_{4},r_{3},r_{2},-r_{1}\right)\\
\left(-r_{1},-r_{2},r_{3},r_{4}\right) & \left(-r_{2},r_{1},-r_{4},r_{3}\right) & \left(r_{3},r_{4},r_{1},r_{2}\right) & \left(r_{4},-r_{3},-r_{2},r_{1}\right)
\end{array}
\]
 with 
\[
\begin{array}{ccccc}
r_{1} & = & \frac{1}{\sqrt{20}}\sqrt{5-\frac{125640}{\sqrt{1029158929}}-1396\sqrt{\frac{2\left(96269-3\sqrt{1029158929}\right)}{17495701793}}} & = & 0.22420547189459831634\\
r_{2} & = & \frac{1}{\sqrt{20}}\sqrt{5-\frac{125640}{\sqrt{1029158929}}+1396\sqrt{\frac{2\left(96269-3\sqrt{1029158929}\right)}{17495701793}}} & = & 0.24102325734564694455\\
r_{3} & = & \sqrt{\frac{1}{4}+\frac{6282}{\sqrt{1029159020}}+\frac{1}{2}\sqrt{\frac{93805283752}{437392544825}+\frac{2923224}{425\sqrt{1029158929}}}} & = & 0.87935205040860456901\\
r_{4} & = & \sqrt{\frac{1}{4}+\frac{6282}{\sqrt{1029159020}}-\frac{1}{2}\sqrt{\frac{93805283752}{437392544825}+\frac{2923224}{425\sqrt{1029158929}}}} & = & 0.34406346396151611679
\end{array}
\]

\begin{proof}
Taking the determinant of the Dixon matrix allows us to solve for
the coordinates of the solution using Cardano's formulas. The roots are 
\[
\left\{ \pm r_{1},\pm r_{2},\pm r_{3},\pm r_{4},\pm\frac{1}{2}\right\}. 
\]
Each coordinate of a  non-extraneous solution will be an element of this set. 
We disregard the possibilities when  $\pm\frac{1}{2}$ is a coordinates as these correspond to extraneous solutions. 
With our method, we have found a finite list of possible non extraneous solutions.
With this finite list and after making substitutions, we will find 
\[
\left(q_{0},q_{1},q_{2},q_{3}\right)=\left(r_{1},r_{2},-r_{3},-r_{4}\right)
\]
is a non extraneous solution. 
Because we have exact solutions, when we make the  substitution  in each of the $f_i$ we have 
\[
f_1\left(r_{1},r_{2},-r_{3},-r_{4}\right)=f_2\left(r_{1},r_{2},-r_{3},-r_{4}\right)=
f_3\left(r_{1},r_{2},-r_{3},-r_{4}\right)
=f_4\left(r_{1},r_{2},-r_{3},-r_{4}\right)=0.
\]
By finding one non-extraneous solution we can determine all eight non extraneous solutions 
using Lemma 6, solving our system.
\end{proof}
\section{Conclusions}
The forward-displacement problem of spherical parallel robots that obey the geometric condition $\mathbf{w}_i^T\mathbf{v}_i=\mathcal{C}_i~~~(i=1,2,3) $ was revisited in this paper. The method proposed uses the quaternion algebra to express the solving system of equations and the Dixon determinant procedure to solve it. 
The method was applied to the $3-\hat{\underline{R}}(RRR)_E\hat{R}$ architecture. According to previous works using different procedures, we found that the forward-displacement problem has sixteen solutions with eight  distinct non-extraneous solutions.  

\end{document}